\newtheorem{theorem}{Theorem}[section]
\newtheorem{corollary}[theorem]{Corollary}
\newtheorem{lemma}[theorem]{Lemma}
\newtheorem{definition}[theorem]{Definition}
\newtheorem{problem}[theorem]{Problem}
\theoremstyle{definition}
\newtheorem{example}[theorem]{Example}
\newtheorem{remark}[theorem]{Remark}
\newtheorem{remarks}[theorem]{Remarks}
\numberwithin{equation}{section}
\newcommand{\into}{\hookrightarrow}
\newcommand{\xdownarrow}[1]{%
  {\left\downarrow\vbox to #1{}\right.\kern-\nulldelimiterspace}
}
\newcommand{\Cb}{\mathbb{C}}
\newcommand{\Fb}{\mathbb{F}}
\newcommand{\F}{\mathbb{F}}
\newcommand{\Pb}{\mathbb{P}}
\newcommand{\Zb}{\mathbb{Z}}
\DeclareMathOperator{\PSL}{PSL}
\newcommand{\tC}{\widetilde{C}}
\DeclareMathOperator{\ed}{ed}
\DeclareMathOperator{\PGL}{PGL}
\DeclareMathOperator{\Gal}{Gal}
\DeclareMathOperator{\characteristic}{char}
\newcommand{\para}[1]{\medskip\noindent\textbf{#1.}}
\title{Essential dimension relative to branched covers of degree at most n}
\author{Benson Farb and Jesse Wolfson}
\address{Department of Mathematics, University of Chicago}
\email{farb@math.uchicago.edu}
\address{ \vskip -.5 cm Department of Mathematics, University of California-Irvine}
\email{wolfson@uci.edu}
\thanks{The authors are partially supported by NSF grants DMS- 2203355(BF), and
DMS-1944862 and DMS-2506184 (JW)}
\begin{document}
\maketitle
\begin{abstract}
We prove for various finite groups $G$ and integers $n\geq 1$ that there are families of equations with Galois group $G$ that cannot be simplified to a one-parameter family even after adjoining a root of a polynomial of degree at most $n$.  In more geometric language, there are $G$-varieties $X$ with the following property: for any $G$-equivariant branched cover $\widetilde{X}\to X$ of degree $\leq n$, there is no dominant rational $G$-map $\widetilde{X}\dashrightarrow C$ to any $G$-curve $C$.  The method  of proof is new, and applies in cases where previous methods do not.
\end{abstract}

\section{Introduction}
Let $k$ be a perfect field. A {\em $G$-variety} over $k$ is a $k$-variety $X$ equipped with a faithful action of a finite group $G$ on $X$ by birational automorphisms. 
A {\em $G$-compression} is a dominant rational map 
\[f:X\dashrightarrow Y\] of $G$-varieties; equivalently, the $G$-action on $X$ is the pullback via $f$ of the $G$-action on $Y$.  In classical language, a $G$-compression is a simplification of equations via a rational change of variables.  

\begin{example}[{\bf Kummer's theorem}]
Suppose that $\characteristic(k)\nmid n$ and that $k$ contains a primitive $n^{th}$ root of unity $\zeta\in k$. Then every $\Zb/n\Zb$-variety compresses to $\Pb^1$ with its standard $\Zb/n\Zb$-action $z\mapsto \zeta\cdot z$. In Galois-theoretic terms, every cyclic extension of a function field of $k$-varieties is given by ``adjoining an $n^{th}$ root''.
\end{example}

In contrast, Felix Klein proved that if $\characteristic(k)=0$ then there is an $A_5$-action on $\Pb^2$ that cannot be compressed to any $A_5$-action on a $1$-dimensional variety.  To state this result in modern terms we need the following definition of Buhler-Reichstein \cite{BR}.

\begin{definition}[{\bf Essential dimension}]
Let $k$ be a field. The {\em essential dimension} over $k$  of a  faithful $G$-variety $X$, denoted 
$\ed_k(X\dashrightarrow X/G)$ or $\ed_k(X)$, is the smallest $d\geq 1$ so that there is a $G$-compression $X\dashrightarrow Y$ over $k$ to a $d$-dimensional faithful $G$-variety $Y$.  
\end{definition}

Kummer's theorem gives $\ed_k(X)=1$ for every $\Zb/n\Zb$-variety $X$ over $k$ with a primitive $n^{th}$ root of unity; Klein's theorem gives $\ed_k(\Pb^2\to \Pb^2/A_5)=2$.\footnote{This holds for any field $k$ not containing $\mathbb{F}_4$, cf.  \cite[Proposition 5]{Ledet} and \cite[Theorem 1.6]{CHKZ}.}  In contrast to his incompressibility result for $A_5$, Klein proved that every $A_5$ extension of function fields is indeed icosahedral after adjoining a square root.  In more geometric language: 

\begin{theorem}[{\bf Klein's Normalformsatz}]
\label{theorem:normalformsatz}
Let $k$ be a field of characteristic 0 with $\sqrt{5}\in k$. Let $X$ be any $A_5$-variety over $k$.  Then $X$ has an 
$A_5$-equivariant branched cover\footnote{By a {\em degree n branched cover} we mean a generically $n$-to-$1$, dominant rational map $\tilde{X}\dashrightarrow X$.} $\widetilde X\dashrightarrow X$ of 
degree at most $2$  
 such that there is an 
$A_5$-compression \[\widetilde{X}\dashrightarrow \Pb^1.\] 
\end{theorem}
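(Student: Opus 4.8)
The plan is to exploit the icosahedral representation $\rho\colon A_5\into\PGL_2(k)=\Aut_k(\Pb^1_k)$, which is defined over $k$ precisely because $\sqrt5\in k$; this $\rho$ is the $A_5$-action on $\Pb^1$ in the statement, and its one feature we will need is that $A_5$ has no fixed point on $\Pb^1_k$ (a common fixed point would embed $A_5$ in a Borel subgroup of $\PGL_2$, which is solvable). We may assume $A_5$ acts on $X$ by regular automorphisms and that $\dim X\geq1$ (for $\dim X=0$ there is no dominant rational map to $\Pb^1$ at all). Writing $L=k(X)$, $F=L^{A_5}$, and letting $P=\Spec(L)\to\Spec(F)$ be the associated $A_5$-torsor, I would form $X\times_k\Pb^1_k$ with the diagonal action (the given one on $X$, the icosahedral one on $\Pb^1$) and pass to the quotient $q\colon Z:=(X\times\Pb^1)/A_5\to X/A_5$. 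This is a conic bundle whose generic fibre is the twisted form $C:={}_P(\Pb^1_F)$ of $\Pb^1$ over $F$ --- a smooth conic, classified by the image of $[P]$ in $H^1(F,\PGL_2)$ --- and a rational section of $q$ is precisely an $F$-rational point of $C$.

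The heart of the matter is the claim that \emph{if $C$ has an $F$-point then $X$ itself admits an $A_5$-compression $X\dashrightarrow\Pb^1$}. Given a rational section $s$ of $q$, I would base change along the quotient $\pi\colon X\to X/A_5$. Since the faithful $A_5$-action is generically free, $\pi$ is generically an $A_5$-torsor, and one checks that this yields an $A_5$-equivariant birational identification $Z\times_{X/A_5}X\iso X\times\Pb^1$ for the diagonal action on the right. Composing $X\xrightarrow{\ (s\circ\pi,\,\mathrm{id})\ }Z\times_{X/A_5}X\iso X\times\Pb^1$ with the second projection gives an $A_5$-equivariant rational map $\phi\colon X\dashrightarrow\Pb^1$. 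Its closed image is irreducible and $A_5$-stable, hence either a point --- which would then be $A_5$-fixed, impossible --- or all of $\Pb^1$; so $\phi$ is dominant, i.e.\ an $A_5$-compression.

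To conclude I would split $C$ after a quadratic base change. A smooth conic over $F$ is the Severi--Brauer variety of a quaternion algebra $(a,b)_F$, and $(a,b)_F$ is split by $F':=F(\sqrt a)$ (one takes $F'=F$, and $\widetilde X=X$, when $C$ already has an $F$-point; otherwise $a\notin F^{\times2}$). Let $\widetilde X\to X$ be the branched cover with function field $L\otimes_FF'=L(\sqrt a)$. Since $a\in F$ and $A_5$ is perfect, $A_5$ cannot negate $\sqrt a$, so its action on $L$ extends to $L(\sqrt a)$ fixing $\sqrt a$; thus $\widetilde X\to X$ is an $A_5$-equivariant branched cover of degree $\leq 2$ with $k(\widetilde X)^{A_5}=F'$. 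Running the twisting construction of the first paragraph for $\widetilde X$ in place of $X$, the relevant conic is $C\times_FF'$, which is split by construction, so the claim of the second paragraph produces the desired $A_5$-compression $\widetilde X\dashrightarrow\Pb^1$.

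I expect the main obstacle to lie in the claim of the second paragraph: making precise the $A_5$-equivariant identification $Z\times_{X/A_5}X\iso X\times\Pb^1$ via generic triviality of $\pi$, and deriving dominance of $\phi$ from the absence of $A_5$-fixed points on $\Pb^1$. The remaining ingredients --- reducing to splitness of a conic, and splitting a conic by a quadratic extension --- are formal; in particular no appeal to versal torsors is needed.
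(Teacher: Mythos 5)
The paper does not prove Theorem~\ref{theorem:normalformsatz} at all: it is quoted as Klein's classical result, so there is no internal argument to compare yours with. Your route --- encoding $A_5$-equivariant rational maps $X\dashrightarrow\Pb^1$ as rational sections of $(X\times\Pb^1)/A_5\to X/A_5$, i.e.\ as $F$-points of the twisted conic $C={}_P(\Pb^1_F)$, and then killing the corresponding quaternion class in $\Br(F)[2]$ by a quadratic extension --- is the standard modern (Galois-twisting) proof, and most of it is fine: the equivariant birational identification using that a faithful finite action is generically free, the splitting of $(a,b)_F$ by $F(\sqrt a)$, and the extension of the $A_5$-action to $L(\sqrt a)$ (note the fact you actually need there is that $\sqrt a\notin L$, which holds because $A_5$ has no subgroup of index $2$; one then extends the action by fixing $\sqrt a$).

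The genuine gap is exactly where you flagged it: dominance. The argument ``the closed image is irreducible and $A_5$-stable, hence a point, which would be $A_5$-fixed'' is only valid when that point is $k$-rational --- equivalently, when $k$ is algebraically closed in $k(X)$. In general the image can be an $A_5$-stable closed point $p$ of positive degree with no geometric fixed point beneath it: translating $C(F)\subset\Pb^1(L)$ into elements $u\in\Pb^1(L)$ with $g(u)=\rho(g)(u)$ for all $g$, the map $\phi$ is non-dominant exactly when $u$ is algebraic over $k$, and such ``constant'' solutions genuinely occur. Indeed, stability of $p$ forces a faithful action of $A_5$ on $k(p)$ (the kernel would fix at least three geometric points of $\Pb^1$), so $[k(p):k]\ge 60$, and whenever $X$ is not geometrically irreducible and $k(X)$ contains such a residue field $A_5$-equivariantly (e.g.\ $X=\Pb^1_{k(p)}$ viewed over $k$ with $A_5$ acting through $\Gal(k(p)/k)$), the tautological point of $\Pb^1(k(p))$ is an $F$-point of $C$ whose associated map collapses $X$ onto $p$. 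So ``$C(F')\ne\emptyset$'' does not by itself produce a compression; you must choose a good point. The repair is short: over $F'$ the conic is split, so its $F'$-points are $u=M(v)$ with $v\in\Pb^1(F')$ for a fixed $M\in\PGL_2(L')$; if the four points $v=0,1,\infty,\lambda$, with $\lambda\in F'$ transcendental over $k$ (such $\lambda$ exists since $\trdeg_k F'=\dim X\ge 1$), all gave values $u$ algebraic over $k$, then their cross-ratio $\lambda$ would be algebraic over $k$, a contradiction. Hence some $F'$-point yields a dominant equivariant map $\widetilde X\dashrightarrow\Pb^1$, and with this fix (and the reading $\dim X\ge 1$, which you correctly note is forced) your proof goes through.
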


Klein's Normalformsatz is an example of a general classical problem, studied by Hamilton \cite{Ham}, Sylvester-Hammond \cite[p.1]{SH} and many others, which asks:  can one reduce the number of variables 
in a system of polynomials by adjoining the solutions of a lower degree polynomial?  In more geometric language \footnote{We leave it to the reader to write down the equivalent Galois-theoretic formulation.}:

\begin{problem}[{\bf Hamilton \cite{Ham}}]
\label{problem:main}
Let $k$ be a field. Compute, for a given faithful $G$-variety $X$ and $n\geq 1$,

\begin{equation}
\label{eq:reled}
\ed_k(X;\leq n):=\min\{\dim(Y) : \exists \ \widetilde{X}\stackrel{\leq n}{\dashrightarrow}X \ \text{and}\ \exists \ \text{$G$-compression}\  
\widetilde{X}\dashrightarrow Y\}
\end{equation}

\vspace{.2cm}
where the min ranges over all faithful $G$-varieties $\widetilde{X}$ and $Y$ over $k$ and all branched covers $\widetilde{X}\dashrightarrow X$ of degree at most $n$.  Further, for a given finite group $G$, compute 
\begin{equation}
\label{eq:reled2}
			\ed_k(G;\leq n):=\sup \ed_k(X;\leq n)
\end{equation}

\vspace{.2cm}
where the supremum is taken over all faithful $G$-varieties $X$.  \footnote{It is known that $\ed_k(G;\leq n)=\ed_k(V;\leq n)$ for any faithful linear $G$-variety $V$ (e.g. \cite[Example 4.6 and Lemma 4.9]{FKW2}). }
\end{problem}

\begin{remark}\mbox{}
	\begin{enumerate}
		\item The assumption that the $G$-actions are faithful is critical.  Without this, there is always the trivial map to a point (with constant $G$-action). 
		\item In classical language, the $G$-variety $X$ encodes the problem of solving for $x\in X$ such that $f(x)=y$ for given $y$, where $f\colon X\to X/G=Y$ is the quotient.  In this language, 		Problem~\ref{problem:main}  asks how simply an equation with Galois group $G$ can be solved using elimination theory and an accessory algebraic function of degree at most $n$. 
	\end{enumerate}
\end{remark}

One reason for Klein's and others' interest in Problem \ref{problem:main} 
is that many of the known solutions to classical equations, for example those involving modular functions and those in enumerative geometry,  are of this form; namely, where one can reduce the number of variables by adjoining the roots of a polynomial of lower degree.  See \cite{FW} and \cite{FKW2} for many examples.    Klein's theorems mentioned above can be written as:

\[\ed_k(A_5)=2\ \ \ \text{but}\ \ \ \ed_k(A_5;\le 2)=1\]
for any $k$ with $\characteristic(k)=0$ and with $\sqrt{5}\in k$. 

\vspace{.2cm}
While the literature of the last 200 years contains upper bounds for 
$\ed_{\Cb}(G;\leq n), n\geq 2$ for many examples, lower bounds are lacking, even in the simplest cases.   For example, Klein proved that any $\PSL_2(\Fb_7)$-variety $X$ has an at-most $4$-sheeted branched cover $\widetilde{X}$ that compresses to the Klein quartic curve, so that
\[ \ed_{\Cb}(\PSL_2(\F_7);\leq 4)=1.\]
Can one do better, replacing $n=4$ by $n=2$ or $n=3$? Corollary \ref{cor:main5} below implies that the answer is ``no'' for $n=2$; that is, $\ed_{\Cb}(\PSL_2(\F_7);\leq 2)>1$.  The case $n=3$ remains open.

\para{Results}  
The main technical result of this paper is the following. Its proof exploits the classical geometry of $G$-curves (see below).

\begin{theorem}[{\bf Main Theorem}]\label{t:main}
	Let $k$ be a perfect field. Let $n\ge 2$. Let $G$ be a finite group such that:
    \begin{enumerate}
        \item $G$ has no proper subgroup of index at most $n$ (in particular $|G|>n$), 
        \item $G$ contains a subgroup $M$ with $|M|>n$ that acts faithfully on $\Pb^1$ over $k$, and
        \item\label{a:cast} $G$ does not act nontrivially on a smooth curve of genus $g\le (n-1)^2$.
    \end{enumerate}
    Then $\ed_k(G;\le n)>1$.
\end{theorem}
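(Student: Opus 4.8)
The plan is to argue by contradiction and to exploit the fact that hypothesis (3) is tailor-made for the Castelnuovo--Severi inequality. Suppose $\ed_k(G;\le n)=1$. By the footnote to Problem~\ref{problem:main}, $\ed_k(G;\le n)=\ed_k(V;\le n)$ for a faithful \emph{linear} $G$-variety $V$, so there is a degree-$d$ branched cover $\widetilde X\dashrightarrow V$ with $d\le n$ and a $G$-compression $\widetilde X\dashrightarrow C$ to a faithful $G$-curve $C$. Replacing $\widetilde X$ by the closure of the image of $\widetilde X\dashrightarrow V\times C$, we may assume $k(\widetilde X)=k(V)\cdot k(C)$. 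Since the $G$-action on $C$ is nontrivial, hypothesis (3) forces $g(C)>(n-1)^2\ge 1$, so in particular $g(C)\ge 2$ and $\Aut(C)$ is finite. The goal is then to produce on $C$ two dominant rational maps $\psi_1,\psi_2\colon C\dashrightarrow\Pb^1$, each of degree $\le n$, which do not both factor through a common rational map $C\dashrightarrow C'$ with $g(C')\ge 1$; the Castelnuovo--Severi inequality will then give $g(C)\le(\deg\psi_1-1)(\deg\psi_2-1)\le(n-1)^2$, the desired contradiction with (3).

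The first map should come from the cover $\widetilde X\to V$ itself. Set $k_0:=k(V)\cap k(C)$ inside $k(\widetilde X)$. After passing, if necessary, to a Galois closure of $k(\widetilde X)/k(V)$ to arrange linear disjointness, one checks that $k_0$ has transcendence degree $1$, that $k_0=k(C_0)$ for a curve $C_0$, and that $C_0$ is unirational (via $k_0\subseteq k(V)$ and the rationality of $V$), hence $C_0\iso\Pb^1$ by L\"uroth's theorem. Since $k(\widetilde X)=k(V)\cdot k(C)$ with $k(V)\cap k(C)=k_0$, the variety $\widetilde X$ is birational to a component of the fibre product $V\times_{C_0}C$; choosing a curve $D\subseteq V$ that maps birationally onto $C_0$ and is transverse to the fibration $V\dashrightarrow C_0$ (which exists because $V$ is rational, e.g.\ by Graber--Harris--Starr applied to that fibration), one gets that $\widetilde X\times_V D$ is birational to $C$ and carries the degree-$d$ projection to $D\iso\Pb^1$. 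This produces $\psi_1\colon C\dashrightarrow\Pb^1$ of degree $\le n$, equivariant for the $G$-action induced on $C_0$; its kernel $N_1\lhd G$ satisfies $|N_1|\le d\le n$ (it acts freely on a generic fibre of $\psi_1$) and $G/N_1\into\PGL_2(k)$.

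The second map is where hypothesis (2) enters, and its construction together with the verification of the Castelnuovo--Severi hypothesis is, I expect, the main obstacle. Restricting the whole picture to $M$ and combining the $M$-action on $C$ with the given faithful $M$-action on $\Pb^1$, the same fibre-product analysis carried out over $k(V)^M=k(V/M)$ should yield a second degree-$(\le n)$ dominant map $\psi_2\colon C\dashrightarrow\Pb^1$. One must then rule out that $\psi_1$ and $\psi_2$ both factor through a common rational map $C\dashrightarrow C'$ with $g(C')\ge 1$. If they did, taking $C'$ maximal and arranging $\psi_1,\psi_2$ to be $G$-equivariant makes $C'$ a $G$-curve admitting two maps to $\Pb^1$ of degree $\le n$ (so $g(C')\le(n-1)^2$) and makes $C\to C'$ have degree $\le n$; the kernel $N'\lhd G$ of the $G$-action on $C'$ then satisfies $|N'|\le n$. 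If $N'=1$ this contradicts (3) applied to $C'$. To exclude $N'\ne 1$ one uses hypotheses (1) and (2) together: since $|N'|\le n<|M|$ the image $\overline{M}$ of $M$ in $G/N'\into\PGL_2(k)$ is nontrivial, and analysing how $N'$ can permute the $\le n$ sheets of $C\to C'$ — using that $G$ has no proper subgroup of index $\le n$ and that $\overline{M}$ is a subgroup of $\PGL_2$ of order exceeding the relevant indices — forces $N'$ to act trivially on $C$, i.e.\ $N'=1$. This is exactly the step where the classical geometry of $G$-curves and all three hypotheses are needed in an essential way.

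Granting the independence of $\psi_1$ and $\psi_2$, Castelnuovo--Severi gives $g(C)\le(n-1)^2$, contradicting $g(C)>(n-1)^2$; hence $\ed_k(G;\le n)>1$. The routine parts are the field-theoretic bookkeeping in the second paragraph (transcendence degrees, linear disjointness, replacement by a Galois closure) and the genus inequality itself; the substantive work is the construction of $\psi_2$ and, above all, the verification that $\psi_1$ and $\psi_2$ do not share a positive-genus common factor, where hypotheses (1) and (2) are indispensable.
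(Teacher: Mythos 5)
Your overall strategy (contradict hypothesis (3) via a Castelnuovo-type genus bound on the compressed $G$-curve) is the right one, but the two pillars of your argument have genuine gaps. For $\psi_1$: there is no justification that $k_0=k(V)\cap k(C)$ has transcendence degree $1$, and even granting that, the bound $[k(C):k_0]\le n$ requires $k(V)$ and $k(C)$ to be \emph{linearly disjoint} over $k_0$, which does not follow from $k_0$ being their intersection (this is exactly the failure mode of the fibre-product claim "$\widetilde X$ is birational to a component of $V\times_{C_0}C$"); passing to a Galois closure of $k(\widetilde X)/k(V)$ destroys the degree-$\le n$ bound rather than repairing it, and Graber--Harris--Starr is inapplicable since the generic fibre of $V\dashrightarrow C_0$ need not be rationally connected. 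For the second half, $\psi_2$ is never constructed, and the Castelnuovo hypothesis you plan to verify is misstated: Castelnuovo's inequality needs $k(\psi_1,\psi_2)=k(C)$, so a common factorization through a \emph{rational} curve of degree $\ge 2$ (e.g.\ $\psi_2=\psi_1$ composed with an automorphism of $\Pb^1$) is just as fatal as one through a positive-genus curve, and the sketched ``sheet-permutation'' argument ruling out $N'\ne 1$ is not a proof. So the parts you flag as ``the substantive work'' are indeed missing, and the parts you call routine are not routine either.

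The paper takes a different route at both points, and the comparison shows what your outline is missing. It never extracts a low-degree map from the cover $\widetilde X\to V$ by intersecting function fields; instead (Lemma~\ref{l:irred}) it uses hypothesis (2) through versality: since $V\to V/G$ is $M$-versal, there is an $M$-equivariant map $\Pb^1\dashrightarrow V$, and pulling the degree-$\le n$ cover $E\to V/G$ back along $\Pb^1/M\to V/G$ produces a curve with a degree-$\le n$ map to $\Pb^1$ which (because $|M|>n$ forces the relevant Galois image to be nontrivial) maps nonconstantly to the compressed faithful $G$-curve $\tC$; the norm argument (Lemma~\ref{l:lur}) then yields a single rational function $f$ of degree $\le n$ on $\tC$, with hypothesis (1) used beforehand to guarantee $\tC$ is irreducible. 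The contradiction is then reached with no ``two independent maps'' bookkeeping at all: Corollary~\ref{c:key} forms the compositum $F=k(\{f_g\}_{g\in G})$ of \emph{all} $G$-translates of $f$, bounds $g(F)\le (n-1)^2$ by the multi-map Castelnuovo inequality (Lemma~\ref{l:castn}), invokes hypothesis (3) to conclude $G$ fixes $F$ pointwise, hence $F=k(f)$, and then $|G|$ divides $\deg f\le n$, contradicting $|G|>n$. That compositum-of-translates device is precisely the idea that replaces your problematic independence verification; without it (or a genuine substitute), your proposal does not close.
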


Theorem \ref{t:main} is applicable because its three hypotheses are easy to check in examples.  Over $\Cb$, we can apply it to give the following.

\begin{corollary}[{\bf Sample results}]
\label{cor:main5}\mbox{}
\begin{enumerate}
\item\label{i:mKl}  Let $G$ be any non-abelian simple finite group except $A_5$. Then 
\[\ed_{\Cb}(G;\le 2)>1.\]
\item\label{i:mA7} $\ed_{\Cb}(A_7;\leq 6)>1$. 
\item\label{i:mPSL2} Let $p\geq 7$ be prime, and let $n\le\min\{p-1,1+\lfloor \sqrt{1+\frac{p(p^2-1)}{168}}\rfloor\}$ (note that for $p>163$ this min equals $p-1$).  Then \[\ed_{\Cb}(\PSL_2(\F_p);\le n)>1.\]
\end{enumerate}
\end{corollary}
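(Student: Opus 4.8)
The plan is to derive Corollary~\ref{cor:main5} from the Main Theorem (Theorem~\ref{t:main}) applied over $k=\Cb$; it then remains to verify hypotheses (1)--(3) of that theorem for each pair $(G,n)$.

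\para{Hypotheses (1) and (2)} These are quick. For (1): if $G$ is nonabelian simple and $n=2$, an index-$2$ subgroup would be normal and an index-$1$ subgroup is not proper, so neither exists; for $A_7$ and $n=6$, a proper subgroup of index $m\le 6$ would yield (by simplicity of $A_7$ and $m\ge 2$) an embedding $A_7\into S_m$, impossible since $|A_7|=2520>720=6!$ --- in fact the minimal index of a proper subgroup of $A_7$ is $7$; for $G=\PSL_2(\F_p)$ and $n\le p-1$, Dickson's classification of subgroups of $\PSL_2(\F_p)$ shows the minimal index of a proper subgroup is $p+1$ for $p\ge 13$ and is $7$ (resp.\ $11$) for $p=7$ (resp.\ $p=11$), in each case exceeding $p-1\ge n$; and $|G|>n$ in all three settings. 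For (2): since a group in which every element has order $\le 2$ is abelian, a nonabelian simple $G$ contains a cyclic subgroup $M$ of order $m\ge 3>2=n$, which acts faithfully on $\Pb^1_\Cb$ via $z\mapsto \zeta z$ ($\zeta$ a primitive $m$th root of unity); for $A_7$ take $M=A_5\subset A_7$ with its icosahedral action, $|M|=60>6$; for $\PSL_2(\F_p)$ take $M$ a unipotent subgroup, $M\iso \Zb/p$ with $|M|=p>p-1\ge n$.

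\para{Hypothesis (3)} This is the substantive point: $G$ must not act nontrivially on a smooth curve of genus $g\le(n-1)^2$, and I would argue by cases on $g$. Any nontrivial action of a simple group on a curve is automatically faithful. The finite subgroups of $\Aut(\Pb^1_\Cb)=\PGL_2(\Cb)$ are only the cyclic, dihedral, $A_4$, $S_4$ and $A_5$ groups, so no nonabelian simple group other than $A_5$ acts on a genus-$0$ curve --- this is exactly why $A_5$ is excluded in part~\ref{i:mKl} (it would fail (3) for $n=2$, and indeed $\ed_\Cb(A_5;\le2)=1$) --- and neither $A_7$ nor $\PSL_2(\F_p)$, $p\ge7$, is on that list. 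The automorphism group of a genus-$1$ curve over $\Cb$ is solvable (an extension of a cyclic group of order $2$, $4$ or $6$ by the curve's group of translations), so none of these groups acts nontrivially in genus $1$. For $g\ge 2$, apply Riemann--Hurwitz to $X\to X/G$: a hyperbolic orbifold has orbifold Euler characteristic $\le -1/42$ (the minimum, realized by the signature $(2,3,7)$), giving $2g-2\ge|G|/42$, i.e.\ $g\ge 1+|G|/84$. For $A_7$ this yields $g\ge 1+2520/84=31>25=(6-1)^2$. For $\PSL_2(\F_p)$, where $|G|=p(p^2-1)/2$, it yields $g\ge 1+p(p^2-1)/168$, and the constraint $n\le 1+\lfloor\sqrt{1+p(p^2-1)/168}\,\rfloor$ is engineered precisely so that $(n-1)^2< 1+p(p^2-1)/168\le g$.

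\para{The main obstacle} The one delicate point is the borderline case of that last inequality, i.e.\ $(n-1)^2=1+p(p^2-1)/168$: this would force the Hurwitz bound $g\ge 1+|G|/84$ to be attained by $\PSL_2(\F_p)$ --- hence, by Macbeath's theorem, $p=7$ or $p\equiv\pm1\pmod 7$ --- and would also require $1+p(p^2-1)/168$ to be a perfect square. I expect this combination never occurs for the relevant primes and would dispose of it by a direct check; e.g.\ for $p=7$ the minimal genus is $3$, and since $1+\lfloor\sqrt3\,\rfloor=2$ the corollary asserts only $\ed_\Cb(\PSL_2(\F_7);\le2)>1$, consistent with the Klein quartic ($g=3$) carrying a faithful $\PSL_2(\F_7)$-action. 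Granting this, hypotheses (1)--(3) hold in all three cases, and Theorem~\ref{t:main} gives $\ed_\Cb(G;\le n)>1$.
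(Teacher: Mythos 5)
Your proposal follows essentially the same route as the paper: apply Theorem~\ref{t:main} over $\Cb$, check hypotheses (1)--(2) by elementary group theory, rule out genus $0$ via the classification of finite M\"obius groups, rule out genus $1$ (you use solvability of the automorphism group of a genus-one curve, where the paper instead invokes centrality of the hyperelliptic involution --- a minor variation), and handle $g\ge 2$ by the Hurwitz bound; all of these verifications are correct. The one loose end is the borderline case you flag, $(n-1)^2=1+p(p^2-1)/168$ with the Hurwitz bound attained; this is a legitimate point (the paper's own write-up passes over it silently, since it only establishes that the minimal genus is $\ge 1+|G|/84$ while Theorem~\ref{t:main}(3) needs strict exclusion of genus $\le (n-1)^2$), but it does close, and the check is finite: for $p>163$ the minimum in the hypothesis is $p-1$ and then $(p-2)^2<1+p(p^2-1)/168$ strictly, so the only possible trouble is a Hurwitz prime (by Macbeath, $p=7$ or $p\equiv\pm1\pmod 7$) with $p\le 163$, i.e.\ $p\in\{7,13,29,41,43,71,83,97,113,127,139\}$, for which $1+p(p^2-1)/168$ takes the values $3,\,14,\,146,\,411,\,474,\,2131,\,3404,\,5433,\,8589,\,12193,\,15986$, none of which is a perfect square; for non-Hurwitz primes the bound $1+|G|/84$ is not attained, so the minimal genus strictly exceeds $(n-1)^2$ in any case. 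With that finite verification written out, your argument is complete and, on this point, slightly more careful than the paper's.
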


\begin{remarks}\mbox{}
\begin{enumerate}
    \item Item~\ref{i:mKl} shows that Klein's {\it Normalformsatz} (Theorem \ref{theorem:normalformsatz}) is exceptional among finite simple groups. 
    \item In the spirit of Hilbert's 13th problem, Item~\ref{i:mA7} shows that the general degree 7 polynomial cannot be reduced to a 1-variable algebraic function even after allowing an accessory sextic. 
    \item Item~\ref{i:mPSL2} shows that for each $n\geq 1$ the theory of $\ed_{\Cb}(-;\le n)$ is nontrivial. 
    \item A finite group $G$ is the Galois group of a family of polynomials of degree $\mu(G)$, where $\mu(G)$ denotes the order of the smallest permutation representation. A sharpened version of Problem~\ref{problem:main} asks for lower bounds on $\ed_k(G,\le \mu(G)-1)$.\footnote{e.g. for $G=A_n$, this sharpened version asks how much we can simplify the general degree $n$ polynomial using only the solution of a single polynomial of lower degree.}  Item~\ref{i:mA7} of 
    Corollary~\ref{cor:main5} addresses this sharp version of Problem~\ref{problem:main} over $\Cb$ for $A_7$.  Similarly, for $p>11$, Galois showed that $\mu(\PSL_2(\F_p))=p+1$.  Thus, for $p>163$, the 
    $n$ in Item~\ref{i:mPSL2} is only off by 1 from the natural choice of $n=p$.
\end{enumerate}
\end{remarks}

\begin{remark}[{\bf Previous methods}]\label{r:prev}
All work up to this point has given lower bounds only for the version of Problem \ref{problem:main} where for a given prime $p$, {\em any} degree prime to $p$ branched cover $\widetilde{X}\to X$ is allowed; this is called  the {\em ``essential dimension at $p$''} and is denoted by $\ed(X;p)$.  See, e.g.\ \cite{BR2, ReiYo,KM,Rei,FKW,BF,FKW2,FKW3}.  These methods applied to $\ed(G;\leq n)$ give exactly the following:
\begin{equation}\label{e:comp}
    \ed_k(G;\le n)\ge  \max\{\{\ed_k(G;p)\}_{p>n},\{\ed_k(P;\le n)\}_{P\subset G~p\text{-Sylow},~p\le n}\}.
\end{equation}
Classical questions about the complexity of solving polynomials, e.g. Problem~\ref{problem:main}, impose a different set of requirements on the collection of branched covers allowed.  To tackle these it is necessary 
to move beyond what essential dimension at $p$ can give. 

As an example, we claim that for $k=\Cb$, $G=A_7$ and $n=6$ the inequality \eqref{e:comp} is strict, and so does not suffice to prove Corollary~\ref{cor:main5} ~\eqref{i:mA7}, since the right-hand side of  \eqref{e:comp} equals $1$ in this case.  To prove this claim, first 
note that 
\[
    |A_7|=3\cdot 4\cdot 5\cdot 6\cdot 7
\]
implies that $\ed_{\Cb}(A_7;p)\le 1$ for $p\ge 5$.  For $p=2,3$, the $p$-Sylow of $A_7$ is isomorphic to $\Zb/p\Zb\times\Zb/p\Zb$, and since $6=2\cdot 3$, we can kill off a $\Zb/p\Zb$ factor in both cases by adjoining a $6^{th}$ root. Kummer's theorem implies that the right-hand side of~\eqref{e:comp} for $A_7$ and $n=6$ equals 1, as claimed. Similar arguments show that the special cases of Corollary \ref{cor:main5}
\[\ed_{\Cb}(\PSL_2(\F_7);\leq 2)>1, \ \ \ed_{\Cb}(\PSL_2(\F_{11});\leq 3)>1, \ \ \text{and}\ \ \ed_{\Cb}(\PSL_2(\F_{13};\le 4)>1\]
give further examples where the inequality~\eqref{e:comp} is strict.  
\end{remark}
  
The outline of the proof of Theorem \ref{t:main} proceeds as follows.   We assume the theorem is false, and from this we construct a single $G$ curve $C$ to which every $G$-variety compresses after taking a degree $\leq n$ cover.  We then construct a $G$-curve to violate this.  The key invariant we use to prove certain curves cannot compress to others is the gonality of a curve.
\smallskip
\newline
\noindent
{\bf Acknowledgements.}
It is a pleasure to thank Mark Kisin for many comments, questions and discussions which helped sharpen and improve this paper.  We thank Curt McMullen and Zinovy Reichstein for helpful comments on a draft.

\section{Rational functions on curves}
We work throughout over a perfect field $k$. Our main tool is Castelnuovo's inequality (see \cite[Theorem 3.11.3]{Stichtenoth} and also \cite[Proposition 1]{Accola} for $k=\Cb$).

\begin{theorem}[Castelnuovo's Inequality]
	Let $C$ be an irreducible algebraic curve over a perfect field $k$. Let $f_i\colon C\to D_i$ be rational maps of curves of degree $n_i\geq 1$ for $i=1,2$.  Assume that the map $(f_1,f_2)\colon C\to D_1\times D_2$ is birational onto its image. 	Then
	\[
		g(C)\le n_1g(D_1)+n_2g(D_2)+(n_1-1)(n_2-1).
	\]
\end{theorem}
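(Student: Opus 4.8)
The plan is to deduce this from the Riemann--Roch theorem by a dimension count; this is the classical Castelnuovo--Severi inequality, and in the paper we simply invoke it from \cite[Theorem 3.11.3]{Stichtenoth} (and \cite{Accola} for $k=\Cb$), so what follows is a sketch of the standard self-contained argument rather than something we reprove.

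First I would pass to smooth projective models, so that the $f_i$ become finite morphisms and $C$ is the normalization of the image of $(f_1,f_2)$ in $D_1\times D_2$; this changes neither the genera nor the degrees $n_i$. In function-field terms the hypothesis becomes $F=F_1F_2$, where $F=k(C)$ and $F_i=k(D_i)$, with $[F:F_i]=n_i$; one may also enlarge the constant field to assume $k=\overline{k}$, and one may assume $n_1,n_2\ge 2$ (the other cases being trivial). Fix general points $P_1\in D_1$ and $P_2\in D_2$ (chosen unramified in $f_1$ and $f_2$, with no place of $C$ lying over both). Since $F$ is finite over $F_1$ and is generated over $F_1$ by the ring of functions on $D_2$ regular away from $P_2$, one can choose an $F_1$-basis $\omega_1,\dots,\omega_{n_1}$ of $F$ consisting of elements of $F_2$ whose poles all lie over $P_2$; symmetrically, an $F_2$-basis $\omega_1',\dots,\omega_{n_2}'$ of $F$ consisting of elements of $F_1$ with poles over $P_1$. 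Let $b$ bound all the pole orders involved.

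For $r\gg 0$ put $A_r:=r\,f_1^{*}P_1+r\,f_2^{*}P_2$ on $C$, of degree $(n_1+n_2)r$. The $k$-subspaces
\[
\mathcal{S}_1:=\big\langle\, f_1^{*}(\phi)\,\omega_j \;:\; \phi\in L_{D_1}(rP_1),\ 1\le j\le n_1 \,\big\rangle ,\qquad
\mathcal{S}_2:=\big\langle\, f_2^{*}(\psi)\,\omega_i' \;:\; \psi\in L_{D_2}(rP_2),\ 1\le i\le n_2 \,\big\rangle
\]
both lie in $L_C(A_r)$. Because the $\omega_j$ are linearly independent over $F_1$ and the $\omega_i'$ over $F_2$, one gets $\dim\mathcal{S}_1=n_1\dim L_{D_1}(rP_1)=n_1\big(r-g(D_1)+1\big)$ and likewise $\dim\mathcal{S}_2=n_2\big(r-g(D_2)+1\big)$ for $r$ large. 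On the other hand $\deg A_r>2g(C)-2$, so Riemann--Roch on $C$ gives $\dim L_C(A_r)=(n_1+n_2)r-g(C)+1$ exactly. Plugging these into $\dim L_C(A_r)\ge \dim\mathcal{S}_1+\dim\mathcal{S}_2-\dim(\mathcal{S}_1\cap\mathcal{S}_2)$, the terms linear in $r$ cancel and one is left with
\[
g(C)\ \le\ n_1 g(D_1)+n_2 g(D_2)-n_1-n_2+1+\dim(\mathcal{S}_1\cap\mathcal{S}_2) .
\]
Hence the whole theorem reduces to the bound $\dim(\mathcal{S}_1\cap\mathcal{S}_2)\le n_1 n_2$, which converts the right-hand side into $n_1 g(D_1)+n_2 g(D_2)+(n_1-1)(n_2-1)$.

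This last bound is the step I expect to be the main obstacle. It is easy to see that $\mathcal{S}_1\cap\mathcal{S}_2$ is finite-dimensional, uniformly in $r$: if $h=\sum_j a_j\omega_j=\sum_i c_i\omega_i'$ lies in the intersection, then from the first decomposition $h$ is regular away from the places over $P_1$ and $P_2$ and has pole order at most $b$ over $P_2$, while from the second decomposition (the $c_i$ having poles only over $P_2$, hence regular over $P_1$) $h$ has pole order at most $b$ over $P_1$; thus $\mathcal{S}_1\cap\mathcal{S}_2\subseteq L_C\big(b\,f_1^{*}P_1+b\,f_2^{*}P_2\big)$. Pinning the constant down to the sharp value $n_1 n_2$, however, requires choosing $P_1,P_2$ and the two bases optimally, so that the pole orders of the $\omega_j$ and $\omega_i'$ realize the relevant Weierstrass non-gaps of $D_1$ and $D_2$, and then counting the resulting constraints precisely. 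This is exactly the computation carried out in \cite[Theorem 3.11.3]{Stichtenoth}, which we quote rather than reproduce.
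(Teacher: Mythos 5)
The paper does not actually prove this statement: Castelnuovo's inequality is quoted directly from \cite[Theorem 3.11.3]{Stichtenoth} (and \cite{Accola} over $\Cb$), so your proposal, which defers the key bound $\dim(\mathcal{S}_1\cap\mathcal{S}_2)\le n_1n_2$ to the same reference, is on essentially the same footing as the paper. The self-contained part of your sketch is correct --- the choice of bases from $F_2$ (resp.\ $F_1$) with poles over $P_2$ (resp.\ $P_1$) uses the birationality hypothesis $F=F_1F_2$ in the right way, the dimension counts $\dim\mathcal{S}_i=n_i(r-g(D_i)+1)$ and the Riemann--Roch computation cancel correctly, and the arithmetic reducing the theorem to the intersection bound is right --- just keep in mind that this intersection bound (your crude estimate only gives roughly $(n_1+n_2)b$) is the real content of the theorem, so nothing beyond the cited result is being reproved, exactly as in the paper.
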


We need a slightly more general form of \cite[Corollary 3.11.4]{Stichtenoth}; presumably the following lemma was known to Riemann.

\begin{lemma}\label{l:castn}
	Let $C$ be an algebraic curve of genus $g(C)$ over a perfect field $k$. Let $j\geq 1$ and let $f_i\colon C\to \Pb^1$ (so $f_i\in k(C)$) have degree $n\geq 1$ for $i=1,\ldots,j$.   If $k(f_1,\ldots,f_j)=k(C)$, then 
	\[
		g(C)\le (n-1)^2.
	\]
\end{lemma}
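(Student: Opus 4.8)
The plan is to induct on the number $j$ of functions, using Castelnuovo's inequality to pass from $j$ functions down to $j-1$. For the base case $j=1$ the hypothesis $k(f_1)=k(C)$ forces $n=[k(C):k(f_1)]=1$, so $C$ is rational and $g(C)=0=(n-1)^2$. For $j=2$ the hypothesis $k(f_1,f_2)=k(C)$ says exactly that $(f_1,f_2)\colon C\to\Pb^1\times\Pb^1$ is birational onto its image, so Castelnuovo's inequality gives $g(C)\le n\cdot 0+n\cdot 0+(n-1)(n-1)=(n-1)^2$.

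For the inductive step, suppose $j\ge 3$ and that the lemma holds for $j-1$ functions, on any curve and for any degree. Set $E:=k(f_1,\ldots,f_{j-1})\subseteq k(C)$, let $D$ be the smooth projective curve with function field $E$, and let $\pi\colon C\to D$ be the induced map, of degree $m:=[k(C):E]$. If $m=1$ then $E=k(C)$, and the $j-1$ functions $f_1,\ldots,f_{j-1}$ of degree $n$ already generate $k(C)$, so $g(C)\le(n-1)^2$ by the inductive hypothesis. Assume now $m\ge 2$. From the tower $k(f_i)\subseteq E\subseteq k(C)$ together with $[k(C):k(f_i)]=n$ we get $n=m\cdot[E:k(f_i)]$ for each $i<j$; in particular $m\mid n$ and each $f_i$ descends to a map $\bar f_i\colon D\to\Pb^1$ of the common degree $n/m$. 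Since $\bar f_1,\ldots,\bar f_{j-1}$ generate $k(D)=E$, the inductive hypothesis gives $g(D)\le(n/m-1)^2$. Now apply Castelnuovo's inequality to the pair $\pi\colon C\to D$ (degree $m$) and $f_j\colon C\to\Pb^1$ (degree $n$): the map $(\pi,f_j)\colon C\to D\times\Pb^1$ is birational onto its image because the compositum of $k(D)$ and $k(f_j)$ inside $k(C)$ is $E(f_j)=k(f_1,\ldots,f_j)=k(C)$. Hence
\[
g(C)\le m\cdot g(D)+n\cdot 0+(m-1)(n-1)\le m\Bigl(\tfrac{n}{m}-1\Bigr)^2+(m-1)(n-1).
\]

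It remains to check the elementary inequality $m\bigl(\tfrac{n}{m}-1\bigr)^2+(m-1)(n-1)\le(n-1)^2$. Rewriting $m\bigl(\tfrac{n}{m}-1\bigr)^2=\tfrac{(n-m)^2}{m}$, this becomes $\tfrac{(n-m)^2}{m}\le(n-1)^2-(m-1)(n-1)=(n-1)(n-m)$; dividing by $n-m\ge 0$ (the case $n=m$ being trivial) it reduces to $n-m\le m(n-1)$, which holds for all $m\ge 1$. This completes the induction. I do not expect a serious obstacle: the real content is the idea of collapsing $f_1,\ldots,f_{j-1}$ onto an intermediate curve $D$ so that Castelnuovo applies to exactly two maps; the only points needing care are that the descended functions $\bar f_i$ all share the degree $n/m$ (so the inductive hypothesis, stated for a single common degree, applies — this uses the uniformity $\deg f_i=n$ and $m\mid n$), that the degenerate case $m=1$ must be dispatched by the induction on $j$ rather than by the (vacuous) use of Castelnuovo, and the closing arithmetic estimate.
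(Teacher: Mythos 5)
Your proof is correct and follows essentially the same route as the paper's: induct on $j$, collapse $f_1,\ldots,f_{j-1}$ onto the intermediate curve with function field $k(f_1,\ldots,f_{j-1})$, bound its genus by the inductive hypothesis, and apply Castelnuovo's inequality to that map together with $f_j$. The only (cosmetic) differences are that your $m$ is the paper's $n/m$ and that you verify the closing estimate $\tfrac{(n-m)^2}{m}+(m-1)(n-1)\le (n-1)^2$ by direct factoring, whereas the paper maximizes the function $h_n(t)=\tfrac{n^2}{t}-3n+nt+1$ on $[1,n]$ by calculus.
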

\noindent 
We remark that $k(f_1,\ldots,f_j)$ is the function field of the curve that is the image of the map $C\to \Pb^1_1\times\cdots\times\Pb^1_j$ under the map $z\mapsto (f_1(z),\ldots ,f_j(z))$.

\begin{proof}
	We prove this by induction on $j$.  The case $j=2$ is exactly the ``Riemann Inequality'', stated as \cite[Corollary 3.11.4]{Stichtenoth}. For the induction step, let $F'=k(f_1,\ldots,f_{j-1})\subset k(C)$.  Denote by $g(F')$ the genus of the smooth projective curve with function field $F'$. Let $n/m=[k(C):F']$.  Then $[F':k(f_i)]=m$ for all $i=1,\ldots,j-1$, and by the inductive hypothesis, 
	\[
		g(F')\le (m-1)^2.
	\]
	By Castelnuovo's Inequality \cite[Theorem 3.11.3]{Stichtenoth}, 
	\begin{align*}
		g(C)&\le \frac{n}{m}g(F')+(\frac{n}{m}-1)(n-1)\\
		&\le \frac{n}{m}(m-1)^2+(\frac{n}{m}-1)(n-1)\\
		&=\frac{n^2}{m} -3n+nm+1\\
		&=:h_n(m).
	\end{align*}
	Because $m\mid n$, it suffices to prove that $h_n(m)\leq (n-1)^2$ for all $m\in [1,n]$.  For this, consider the function $h_n(t)=\frac{n^2}{t}-3n+nt+1$ as an analytic function of $t$ on the positive real line. For $t=1,n$, we have
	\[
		h_n(1)=h_n(n)=n^2-2n+1=(n-1)^2.
	\]
	Taking the derivative in $t$, we see that $h'_n(t)=n-\left(\frac{n}{t}\right)^2$, and thus $h_n(t)$ has a unique critical point in the positive reals at $t=\sqrt{n}\in [1,n]$.  Therefore, the maximum of $h_n(t)$ for $t\in [1,n]$ 
	occurs at $t=\sqrt{n}$ or at one of the endpoints $t=1,n$.  But for 
	$n>1$: 
	\begin{align*}
		h_n(\sqrt{n})&=2n\sqrt{n}-3n+1\\
		&=(2\sqrt{n}-1)n-2n+1\\
		&\le n^2-2n+1\\
		&=h_n(1)=h_n(n)=(n-1)^2. 
	\end{align*}
	We conclude that $h_n(t)\le (n-1)^2$ for all $t\in [1,n]$ and thus conclude the inductive step as claimed. 
\end{proof}

\begin{corollary}\label{c:key}
	Let $G$ be a finite group. Assume that $|G|\nmid n$ and that $G$ does not act nontrivially on an algebraic curve of genus at most $(n-1)^2$. Then no faithful irreducible $G$-curve $C$ admits a degree $n$ rational function. 
\end{corollary}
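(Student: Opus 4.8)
The plan is to argue by contradiction. Suppose $C$ is a faithful irreducible $G$-curve admitting a rational function $f\in k(C)$ of degree $n$. For $g\in G$ write $g\cdot f:=f\circ g^{-1}\in k(C)$; since $g$ acts by a birational automorphism of $C$, each $g\cdot f$ again has degree $n$. Set $F:=k(g\cdot f:g\in G)\subseteq k(C)$. This is a subfield, it is finitely generated over $k$ of transcendence degree $1$ (as $f$ is nonconstant), and it is $G$-stable because the $G$-action permutes its generators. Hence $F$ is the function field of an irreducible curve $C'$, the $G$-action on $k(C)$ restricts to an action on $F$ making $C'$ a $G$-curve, and the inclusion $F\hookrightarrow k(C)$ corresponds to a $G$-equivariant dominant map $C\to C'$.

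Next I would bound the genus of $C'$. Let $m:=[k(C):F]$. In the tower $C\to C'\to\Pb^1$ determined by any single $g\cdot f$, multiplicativity of degree gives $n=\deg_C(g\cdot f)=m\cdot\deg_{C'}(\overline{g\cdot f})$, so $m\mid n$ and each $g\cdot f$ descends to a rational function on $C'$ of degree $n/m$; moreover these descended functions generate $k(C')=F$. Applying Lemma~\ref{l:castn} to $C'$ with this finite family of degree-$(n/m)$ functions yields $g(C')\le(n/m-1)^2\le(n-1)^2$.

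Finally I would invoke the hypotheses to reach a contradiction. Since $g(C')\le(n-1)^2$, the assumption that $G$ does not act nontrivially on a curve of genus at most $(n-1)^2$ forces the $G$-action on $C'$ to be trivial, i.e.\ $G$ fixes $F$ pointwise. In particular $f\in F$ is $G$-invariant, so $k(f)\subseteq k(C)^G\subseteq k(C)$. Because the $G$-action on the irreducible curve $C$ is faithful, $k(C)/k(C)^G$ is Galois of degree $|G|$; hence $|G|=[k(C):k(C)^G]$ divides $[k(C):k(f)]=\deg f=n$, contradicting $|G|\nmid n$.

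The one step needing care is the second paragraph: one must check that $C\to C'$ is an honest dominant map of curves and that degree is multiplicative through the tower $C\to C'\to\Pb^1$, so that the functions on $C'$ genuinely have degree $n/m$ and Lemma~\ref{l:castn} applies. The rest is formal: the $G$-orbit construction is exactly what produces a $G$-stable intermediate field, and the conclusion is then a divisibility statement from Galois theory.
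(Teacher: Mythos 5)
Your proposal is correct and follows essentially the same route as the paper: form the $G$-stable compositum $F=k(\{f\circ g\}_{g\in G})$, bound the genus of the corresponding curve via Lemma~\ref{l:castn}, conclude the $G$-action on it is trivial, and derive the divisibility contradiction $|G|\mid n$. Your explicit check that the descended functions have degree $n/m$ on $C'$ just spells out what the paper leaves implicit when it applies Lemma~\ref{l:castn} to $F$.
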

\begin{proof}
	Let $C$ be a faithful $G$ curve. Suppose the contrary, i.e. there exists a degree $n$ map $f\colon C\to \Pb^1$. For $g\in G$, let $f_g\colon C\to\Pb^1$ denote the map $x\mapsto f(gx)$.  Let 
	\[
		F=k(\{f_g\}_{g\in G})\subset k(C)
	\]
	denote the compositum. By construction, the field $F$ is $G$-invariant.  By Lemma~\ref{l:castn}, $g(F)\le (n-1)^2$.  Therefore, our assumption on $G$ implies that $F\subset k(C)^G$, i.e. $G$ fixes all the elements of $F$, and thus $F=k\left(\{f_g\}_{g\in G}\right)=k(f)$. But then,
	\begin{align*}
		n=[k(C):k(f)]&=[k(C):k(C)^G][k(C)^G:k(f)]\\
			&=|G|[k(C)^G:k(f)]
	\end{align*}
	which contradicts our assumption that $|G|\nmid n$.
\end{proof}


We close this section with two additional lemmas.  First, a standard exercise with the field norm shows the following.
\begin{lemma}\label{l:lur}
	Let $C$ be an irreducible curve. Suppose there exists a dominant map $H\to C$ and a degree $n$ rational function $h\colon H\to \Pb^1$. Then $C$ has a degree $n$ rational function $f\colon C\to \Pb^1$.
\end{lemma}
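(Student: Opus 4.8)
The plan is to prove Lemma~\ref{l:lur} by descending the degree-$n$ rational function $h$ from $H$ to $C$ via the field norm of the extension $k(H)/k(C)$.

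\textbf{Setup.} Let $\pi\colon H\dashrightarrow C$ be the given dominant map, inducing a finite field extension $k(C)\hookrightarrow k(H)$; write $d=[k(H):k(C)]$. We may pass to the Galois closure, but in fact we do not need to: the field norm $N=N_{k(H)/k(C)}\colon k(H)\to k(C)$ is defined for any finite separable extension (and $k$ is perfect, so $k(H)/k(C)$ is separable). The rational function $h\in k(H)$ has degree $n$ as a map to $\Pb^1$, which means $[k(H):k(h)]=n$.

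\textbf{Key step.} Set $f:=N(h)\in k(C)$. First I would argue that $f$ is non-constant: if $N(h)\in k$, then since $N(h)$ is, up to sign, the product of the Galois conjugates of $h$ (equivalently the constant term of the minimal polynomial of $h$ over $k(C)$), and each conjugate is a non-constant rational function on the corresponding curve, a valuation-theoretic argument shows the product cannot lie in $k$ — concretely, pick a place of $k(H)$ where $h$ has a zero; the norm then has positive valuation at the place of $k(C)$ below it, so $f\notin k$. Hence $f$ defines a non-constant map $C\to\Pb^1$ of some degree $n'=[k(C):k(f)]\ge 1$. The remaining task is to bound $n'\le n$. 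For this, observe $k(f)\subset k(C)\subset k(H)$, and $f=N(h)$ lies in $k(h)$ pulled back appropriately — more precisely, $N(h)$ is a polynomial expression in $h$ with coefficients in $k(C)$, so $k(C)(f)=k(C)$ trivially, which is not quite what I want. Instead the clean statement is: $k(h)\cap k(C) \supseteq k(f)$, because $f=N(h)$ is both in $k(C)$ (by definition of the norm) and algebraic over $k(h)$ of degree dividing... I need to be a little more careful here. The right approach: $[k(C):k(f)]\le [k(H):k(f)]/[k(H):k(C)]\cdot(\text{something})$ is awkward, so instead use $[k(H):k(f)] = [k(H):k(h)]\cdot[k(h):k(f)]$ is false since $f\notin k(h)$ in general either. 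The cleanest route is the tower $k(f)\subset k(C)\subset k(H)$ giving $n' = [k(C):k(f)] \le [k(H):k(f)]$, combined with $[k(H):k(f)] \le [k(H):k(h)]\cdot d = nd$ — no. Let me reconsider: the standard ``exercise with the field norm'' is that the norm of a degree-$n$ function has degree at most $n$, proven via divisors: $\deg(f)=\deg(\text{polar divisor of }f)$, and $\operatorname{div}(f) = \sum_{\sigma}\operatorname{div}(\sigma h)$ pushed down, so the poles of $f$ on $C$ are dominated by the images of the poles of $h$ on $H$, each contributing at most $n$ total (a place $P$ of $k(C)$ can be a pole of $N(h)$ only if some place above it is a pole of $h$, and the total polar degree is controlled by $n$).

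\textbf{Main obstacle.} The subtle point — and the one I would spend the most care on — is the degree bound $\deg(f)\le n$ rather than $\deg(f)\le nd$. The naive divisor pushforward gives $\deg(N_{k(H)/k(C)}(h)) \le \deg_H(h) = n$ because the norm on divisors, $\operatorname{Norm}\colon \Div(H)\to\Div(C)$, satisfies $\operatorname{div}_C(N(h)) = \operatorname{Norm}(\operatorname{div}_H(h))$ and $\operatorname{Norm}$ preserves degree: $\deg(\operatorname{Norm}(D)) = \deg(D)$. Hence the polar divisor of $f$ has degree $\le$ degree of polar divisor of $h$ $= n$, so $\deg(f) = [k(C):k(f)]\le n$. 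If $\deg(f)<n$ this is still fine — the lemma only asserts existence of \emph{a} degree-$n$ function, but actually it claims degree exactly $n$; if $\deg(f)=n'<n$ one can compose with a generic degree-$\lceil n/n'\rceil$... no — more simply, one adjusts by noting any curve admitting a degree-$n'$ function admits a degree-$n$ function for $n'\mid n$ by composing with $\Pb^1\to\Pb^1$, but for general $n$ one instead replaces $h$ by $h+c$ for generic $c\in k$ or takes $\lambda h$ to arrange the polar divisor of the norm to have full degree $n$, or simply observes that the statement as used only needs $\le n$. I would state and prove the $\le n$ bound cleanly via the norm map on divisors and remark that equality can be arranged (or is not needed downstream).
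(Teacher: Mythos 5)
Your approach is the same as the paper's: the paper's entire proof is the one-line assertion that $f:=N_{k(H)/k(C)}(h)$ has degree $n$, and your degree bound via the norm on divisors --- i.e.\ $v_{P'}(N(h))=\sum_{P\mid P'}f(P\mid P')\,v_P(h)$ together with the fact that pushforward of places preserves degree --- is the right way to make that precise, giving $[k(C):k(f)]\le n$ whenever $f$ is nonconstant.

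However, your non-constancy step has a genuine gap: a zero of $h$ at a place $P$ does \emph{not} force $N(h)$ to have positive valuation at the place $P'$ below it, because zeros and poles of $h$ lying in the same fiber can cancel in the sum $\sum_{P\mid P'}f(P\mid P')\,v_P(h)$. In fact $N(h)$ can be constant: take $H=C=\Pb^1$ with $\pi(z)=z^2$ (for $\characteristic(k)\neq 2$) and $h(z)=(z-1)/(z+1)$; then $N(h)=h(z)h(-z)=1$. This also shows that the paper's own assertion that $N(h)$ has degree exactly $n$ is not literally correct as stated. The repair is precisely the remark you relegate to the end: replace $h$ by $h+c$ (or by $\mu\circ h$ for a M\"obius transformation $\mu$) with $c$ chosen so that no zero of $h+c$ lies in a fiber of $\pi$ containing a pole of $h$; only finitely many values of $c$ are excluded, so such a $c$ exists whenever $k$ is infinite (which covers the paper's applications), and then no cancellation occurs and $N(h+c)$ is nonconstant of degree exactly $n$. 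That adjustment should be the main step of your proof rather than an afterthought; with it your argument is complete, and indeed more careful than the paper's. You are also right that the downstream use (Lemma~\ref{l:irred} feeding into Corollary~\ref{c:key} in the proof of Theorem~\ref{t:main}) only needs a nonconstant function of some degree $m\le n$, since $|G|>n\ge m$ already guarantees $|G|\nmid m$.
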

\begin{proof}
	Let $h\colon H\to\Pb^1$ be a degree $n$ map. Let $N_{k(H)/k(C)}\colon k(H)^\times\to k(C)^\times$ denote the field norm.  Then $f:=N_{k(H)/k(C)}(h)\in k(C)$ is degree $n$. 
\end{proof}


\section{Induced Actions on Unions of Rational Curves}
As above, we work over a perfect field $k$. Our main invariant for showing $\ed_k(-;\le n)>1$ comes from studying actions on unions of rational curves induced from a finite subgroup of $\PSL_2(k)$. We can now state and prove our key lemma.
\begin{lemma}\label{l:irred}
	Let $G$ be a finite group. Let $k$ be a perfect field. Suppose that:
    \begin{enumerate}
        \item $\ed_k(G;\le n)=1$,
        \item $G$ has no proper subgroup of index at most $n$, and
        \item $G$ contains a subgroup $M\subset G$ such that $M\into \PSL_2(k)$ and $|M|>n$.
    \end{enumerate}
    Then there exists a smooth, irreducible, projective faithful $G$-curve $\tilde{C}$ with a degree $m$ rational function $f\colon\tC\to\Pb^1$ for some $m\le n$.  
\end{lemma}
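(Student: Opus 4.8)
The plan is to feed the hypothesis $\ed_k(G;\le n)=1$ the faithful $G$-variety \emph{induced} from the $M$-action on $\Pb^1$, and then to read off $\tilde C$ from the resulting branched cover using Lemma~\ref{l:lur}. I would first dispose of the trivial case: if $M=G$, then $G\into\PSL_2(k)$ acts faithfully on $\Pb^1$, so one takes $\tilde C=\Pb^1$ and $f=\mathrm{id}$ (degree $1\le n$). Henceforth assume $M\subsetneq G$; by hypothesis (2) this forces $[G:M]>n$. Set $X_0:=G\times_M\Pb^1$, the disjoint union of the $[G:M]$ copies $\Pb^1_{gM}$ of $\Pb^1$ that $G$ permutes transitively, the stabilizer $M$ of $\Pb^1_{eM}$ acting through $M\into\PSL_2(k)$. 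One checks the $G$-action on $X_0$ is faithful: an element acting trivially must fix each copy, hence lie in $\bigcap_{g}gMg^{-1}$, and on $\Pb^1_{gM}$ it acts as the element $g^{-1}(\,\cdot\,)g$ of $M$, which is then trivial.

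Since $\ed_k(G;\le n)$ is the supremum of $\ed_k(X;\le n)$ over all faithful $G$-varieties (including such unions of rational curves), $\ed_k(X_0;\le n)=1$. Hence there are a degree $\le n$ branched cover $q\colon\tilde X_0\dashrightarrow X_0$ and a $G$-compression $p\colon\tilde X_0\dashrightarrow C$ onto a faithful irreducible $G$-curve $C$, which I may take smooth and projective. Passing to the normalization of $\tilde X_0$, I may assume $\tilde X_0$ is a disjoint union of smooth curves permuted by $G$ and that $q,p$ are morphisms; discarding any $G$-orbit of components collapsed to a point by \emph{both} $q$ and $p$ — which changes neither the dominance and degree of $q$ nor the dominance of $p$ — I may further assume no component is collapsed by both. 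Each remaining component is then carried by $q$ dominantly onto a single copy $\Pb^1_{gM}$ with degree $\le n$, or to a point, and similarly for $p$ and $C$.

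Because $p$ is dominant and $C$ irreducible, some component $\tilde C_0$ dominates $C$. If $q$ also carries $\tilde C_0$ dominantly onto some $\Pb^1_{gM}$, then $q|_{\tilde C_0}$ is a degree $\le n$ rational function on $\tilde C_0$, and since $\tilde C_0\dashrightarrow C$ is dominant, Lemma~\ref{l:lur} yields a degree $\le n$ rational function on $C$; then $\tilde C:=C$ works. So everything reduces to finding a component of $\tilde X_0$ that dominates both a copy of $\Pb^1$ and $C$, and I expect \emph{this} to be the main obstacle. The configuration to be excluded is $\tilde X_0=\tilde X_A\sqcup\tilde X_B$, where $\tilde X_A$ consists of the $q$-dominant, $p$-collapsed components and $\tilde X_B$ of the $q$-collapsed, $p$-dominant ones; both are $G$-stable, and restricting $q$ (resp.\ $p$) exhibits $\tilde X_A$ (resp.\ $\tilde X_B$) as a faithful $G$-curve. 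In this case the stabilizer of a component of $\tilde X_B$ lies in a point-stabilizer of $X_0$ — hence in a conjugate of $M$, in fact in a cyclic subgroup — and the stabilizer of a component of $\tilde X_A$ lies in a point-stabilizer of the curve $C$; by hypothesis (2) each of these is a subgroup of index $>n$ in $G$.

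My plan for eliminating this last configuration is to exploit these index bounds together with the fact that $\tilde X_A$ is itself an induced variety $G\times_M\tilde X_{A,0}$, where $\tilde X_{A,0}$ is a faithful $M$-curve carrying an $M$-equivariant degree $\le n$ map to $\Pb^1$ \emph{all} of whose components dominate $\Pb^1$; thus every component of $\tilde X_A$ already carries a degree $\le n$ rational function. The goal is to produce from $\tilde X_A$ (equivalently, from some degree $\le n$ cover of $X_0$ with no collapsed components) a $G$-compression onto a faithful irreducible $G$-curve $C'$: for then a component of $\tilde X_A$ dominating $C'$ has a degree $\le n$ rational function, and Lemma~\ref{l:lur} concludes with $\tilde C:=C'$. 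Carrying out the required combinatorial analysis of the $G$-set of components of $\tilde X_0$ — in particular, using hypothesis (2) to rule out the ``collapsed'' configuration obstructing passage to an irreducible quotient curve — is the technical heart of the proof.
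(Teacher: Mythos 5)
You have correctly isolated the crux, but you have not crossed it: the whole proof reduces, by your own account, to excluding the configuration in which every component of $\widetilde{X}_0$ that dominates a copy of $\Pb^1$ is collapsed by the compression $p$, while the components dominating $C$ are collapsed by $q$ — and for this you offer only a plan (``exploit these index bounds,'' ``combinatorial analysis of the $G$-set of components''), not an argument. It is not clear the plan can be completed: in that configuration nothing ties $C$ to the $\Pb^1$'s at all; the $q$-collapsed part $\widetilde{X}_B$ may have arbitrarily many components without affecting the generic degree of $q$, the lemma does not assume $G$ acts on no curves (that is hypothesis (3) of Theorem~\ref{t:main}, used only later via Corollary~\ref{c:key}), so a faithful $G$-curve $C$ may genuinely exist, and counting indices of component stabilizers produces no contradiction. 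This is exactly where hypotheses (2) and $|M|>n$ must do quantitative work, and the paper makes them do it at the level of function fields: it applies $\ed_k(G;\le n)=1$ not to the reducible induced variety but to an irreducible versal linear $G$-variety $V$; hypothesis (2) then forces the image of $\Gal(\bar\kappa(E)/\kappa(E))\to G$ (which has index $\le n$) to be all of $G$, so the pulled-back cover $\pi^*V\to E$, and hence $\tC$, is irreducible — no component can ``split off''; the $\Pb^1$ is imported by $M$-versality of $V$ (an $M$-equivariant map $\Pb^1\to V$ over $Y=\Pb^1/M\to V/G$), and $|M|>n$ forces the image of $\Gal(\bar\kappa(Y_E)/\kappa(Y_E))\to M$, of index $\le n$, to be nontrivial, i.e.\ $Y_E\to C$ is nonconstant. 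That nonconstancy is precisely the ``no collapse'' statement you are missing; Lemma~\ref{l:lur} then finishes as in your sketch. Your decision to feed the hypothesis the reducible $X_0=G\times_M\Pb^1$ is what creates the splitting problem, because hypothesis (2) cannot force connectivity of a cover of a disconnected base.

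There are also two foundational points you elide. First, invoking $\ed_k(G;\le n)=1$ for $X_0$ presumes the supremum in the definition runs over reducible faithful $G$-varieties; if it does, you are then not entitled to take the compression target $C$ irreducible (you assert this without justification), and if it does not, you need a versality argument to transfer a compression statement to $X_0$ in the first place — which is the role played by $V$ and the paper's footnote that $\ed_k(G;\le n)=\ed_k(V;\le n)$. Second, your parenthetical claim that the relevant component stabilizers have index $>n$ ``by hypothesis (2)'' requires knowing they are proper subgroups, which is not automatic for point stabilizers on $C$. Neither of these is fatal in itself, but together with the unexcluded collapsed configuration they leave the proof essentially open at its decisive step.
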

\begin{proof}  We work throughout in the birational category, i.e. the category of varieties and rational maps.  

Let $V$ be a faithful representation of $G$, viewed as a linear variety. 
By assumption, there exists a branched cover $\pi\colon E\dashrightarrow V/G,$ of degree $\leq n$ 
such that $\pi^*(V \rightarrow V/G)$ arises (rationally) by pullback from a $G$-cover of smooth projective curves  
$\tilde C \rightarrow C.$
Extend the inclusion of function fields $\kappa(V/G) \rightarrow \kappa(E)$ to an inclusion of separably closed fields 
$\bar \kappa(V/G) \rightarrow \bar \kappa(E),$ and consider the maps 
$$ \Gal(\bar \kappa(E)/ \kappa(E)) \rightarrow \Gal(\bar\kappa(V/G) \rightarrow \kappa(V/G)) \rightarrow G.$$ 
corresponding to $V \rightarrow V/G.$ The second map is surjective, and the image of the composite map 
has index $\leq n,$ as $\pi$ has degree $\leq n.$ By our assumption on $G$, we conclude that the composite map is surjective, and hence $\pi^*(V) \rightarrow E$ 
is an irreducible $G$-cover. In particular, $\tilde C$ is irreducible.

Next we remark that since $V \rightarrow V/G$ is $G$-versal, it is $M$-versal. 
Indeed, if $X$ is any $M$-variety, one may apply $G$-versality to $(X \times G)/M.$ 
Thus there is a rational, $M$-equivariant map $\Pb^1 \rightarrow V,$ corresponding to 
$Y: = \Pb^1/M \rightarrow V/G.$ Let $Y_E = \pi^*(Y),$ 
let $\kappa(Y_E) \supset \kappa(Y),$ be the function field at some generic point of $Y_E,$ 
and let $\bar \kappa(Y_E)$ be a separable closure of $\kappa(Y_E).$ Consider the composite 
$$ \Gal(\bar \kappa(Y_E)/ \kappa(Y_E)) \rightarrow \Gal(\bar \kappa(Y_E)/ \kappa(Y)) \rightarrow M.$$ 
The second map is surjective, and the image of the composite has index $\leq n.$ In particular, this image is 
non-trivial, as $|M| > n.$ Hence the composite map 
$$ Y_E \rightarrow E \rightarrow C $$
is non-constant, and so the corresponding $M$-equivariant map $\tilde Y_E : \pi^*(\Pb^1) \rightarrow \tilde C$ 
is nonconstant. As $\tilde Y_E \rightarrow \Pb^1$ has degree $\leq n,$ 
we conclude by Lemma~\ref{l:lur}, that $\tC$ admits a map $h\colon \tC\to\Pb^1$ of degree $\le n$.
	\end{proof}

\section{Finishing the proofs of Theorem~\ref{t:main} and Corollary~\ref{cor:main5}}\label{s:proofs}
We now complete the proofs of the theorems stated in the introduction.
\begin{proof}[Proof of Theorem~\ref{t:main}]
Let $n\ge 1$.  Let $G$ be a finite group satisfying the assumptions of the theorem.  Suppose to the contrary that $\ed_k(G;\le n)=1$. By Lemma~\ref{l:irred}, there exists a smooth, irreducible projective curve $\tC$ with a faithful $G$-action and a degree $m$ rational function for some $m\le n$. By Corollary~\ref{c:key}, this implies that $G$ acts nontrivially on a curve of genus at most $(m-1)^2\le (n-1)^2$. But this contradicts Assumption~(\ref{a:cast}) of the theorem.  Thus $\ed_k(G;\le n)>1$.
\end{proof}

There are infinitely many examples of $(k,G,n)$ to which Theorem~\ref{t:main} applies: e.g. for $k=\Cb$, $G$ simple, and for 
\begin{equation}\label{e:simple}
    n\le \min\{d(G),\max\{m~|~C_{m+1}\subset G,1+\sqrt{1+|G|/84}\}\}
\end{equation} where $d(G)$ denotes the size of the smallest permutation representation of $G$.  For the finite groups $G$ of classical type, a complete list of $d(G)$ is given in \cite[Table 1]{Cooperstein}. For every finite simple group $G$ the number 
$d(G)$ can be extracted from the classification of finite simple groups (e.g. see the Atlas \cite{ATLAS} for the sporadic simple groups), and by Cauchy's Lemma, one can replace the max over cyclic subgroups in~\eqref{e:simple} by $p-1$ for $p$ the largest prime dividing $|G|$. As the labeling implies, Corollary~\ref{cor:main5} gives a set of such examples. 
\begin{proof}[Proof of Corollary~\ref{cor:main5}]
    We start with statement~\ref{i:mKl}. Let $G$ be a non-abelian finite simple group not isomorphic to $A_5$. As every index 2 subgroup is normal, $G$ has no proper subgroup of index 2.  Further, there exists an odd prime $p$ such that $p\mid |G|$, and thus by Cauchy's lemma, a cyclic subgroup $C_p\subset G$ of order greater than 2. By Theorem~\ref{t:main}, it suffices to prove that $G$ does not act on an elliptic or rational curve over $\Cb$. Because the hyperelliptic involution is unique and central in the automorphism group of a hyperelliptic curve \cite[Ch. III, Corollaries 2, 3, p. 108]{FaKr}, we see that every group acting faithfully on a rational, elliptic or hyperelliptic curve is a subquotient of a $C_2$-central extension
    \[
	   1\to C_2\to G\to \bar{G}\to 1
    \]
    with $\bar{G}\subset \PGL_2(\Cb)$. Combining Theorem \ref{t:main} with Klein's \cite{KleinIcos} classification of finite M\"obius groups, we obtain statement~\ref{i:mKl}.

For statement~\ref{i:mA7}, note that $A_7$ has no subgroup of index less than 7, and there exists a $C_7\subset A_7$ (pick a 7-cycle). The argument above combines with the Hurwitz bound to show that $A_7$ does not act nontrivially on any curve of genus less than 31.  But $31>(6-1)^2=25$, so the statement follows from Theorem~\ref{t:main}.

    For the final statement~\ref{i:mPSL2}, as observed above, for $p\ge 7$ the group $\PSL_2(\F_p)$ is simple and does not act on a rational or elliptic curve. Therefore, by the Hurwitz bound, $\PSL_2(\F_p)$ does not act on a curve of genus less than $|\PSL_2(\F_p)|/84+1$. It remains to verify the first two assumptions of Theorem~\ref{t:main}. For $p=7,11$, $\PSL_2(\F_p)$ has no subgroup of index less than $p$, and the upper bound on $n$ above is equal to $1+\lfloor\sqrt{1+\frac{p(p^2-1)}{168}}<p-1$. For $p>11$, Galois showed that $\PSL_2(\F_p)$ has no subgroup of index less than $p+1$ (cf. \cite[p. 213]{Cooperstein}). In all cases, we see that $\PSL_2(\F_p)$ satisfies the first assumption in the statement of Theorem~\ref{t:main}.  For the second assumption, note that for all $p>2$, we have $|\PSL_2(\F_p)|=\frac{p(p^2-1)}{2}$.  By Cauchy's Lemma, $C_p\subset \PSL_2(\F_p)$. In each case, we conclude by Theorem~\ref{t:main} that $\ed_{\Cb}(\PSL_2(\F_p);\le n)>1$.
\end{proof}

\begin{bibsection}
\begin{biblist}

\bib{Accola}{article} {
			AUTHOR = {Accola, R.},	
			TITLE = {Strongly branched coverings of closed {R}iemann surfaces},
			JOURNAL = {Proc. Amer. Math. Soc.},
			VOLUME = {26},
			YEAR = {1970},
			PAGES = {315–322},
		}

\bib{BF}{article}{
	AUTHOR = {Brosnan, Patrick},
	AUTHOR = {Fakrhuddin, Najmudden},
	TITLE = {Fixed points in toroidal compactifications of Shimura varieties and essential dimension of congruence covers},
	JOURNAL = {J. Alg. Geom.},
	volume = {33},
	number = {2},
	YEAR = {2024},
	pages = {295--346},
}

\bib{BR}{article} {
    AUTHOR = {Buhler, J.},
    AUTHOR = {Reichstein, Z.},
     TITLE = {On the essential dimension of a finite group},
   JOURNAL = {Compositio Math.},
    VOLUME = {106},
      YEAR = {1997},
    NUMBER = {2},
     PAGES = {159--179},
     }
     
\bib{BR2}{article}{
    AUTHOR = {Buhler, Joe},
    AUTHOR = {Reichstein, Zinovy},
     TITLE = {On Tschirnhaus transformations},
   JOURNAL = {Topics in number theory (University Park, PA, 1997)},
   SERIES = {Math. Appl.},
   VOLUME = {467},
   PUBLISHER = {Kluwer Acad. Publ., Dordrecht},
      YEAR = {1999},
    NUMBER = {2},
     PAGES = {127--142},
}

\bib{ATLAS}{book}{
    AUTHOR = {Conway, J.H.},
    AUTHOR = {Curtis, R.T.},
    AUTHOR = {Norton, S.P.},
    AUTHOR = {Parker, R.A.},
    AUTHOR = {Wilson, R.A.},
    TITLE = {Atlas of Finite Groups - Maximal Subgroups and Ordinary Characters for Simple Groups}
    PUBLISHER = {Clarendon Press, Oxford}
    YEAR = {1985}
}

\bib{CGR}{article} {
			AUTHOR = {Chernousov, V.},
            AUTHOR = {Gille, P.},
            AUTHOR = {Reichstein, Z.},
			TITLE = {Resolving $G$-torsors by abelian base extensions},
			JOURNAL = {J. Algebra},
			VOLUME = {296},
			YEAR = {2006},
			PAGES = {561--581},
		}

\bib{CHKZ}{article}{
	author = {Chu, H.},
	author = {Hu, S.-J.},
	author = {Kang, M.-C.},
	author = {Zhang, J.},
	title = {Groups with essential dimension one},
	journal = {Asian J. Math.},
	volume = {12},
	number = {2},
	year = {2008},
	pages = {177-192},
}

\bib{Cooperstein}{article} {
			AUTHOR = {Cooperstein, B.},	
			TITLE = {Minimal degree for a permutation representation of a classical group},
			JOURNAL = {Israel J. Math.},
			VOLUME = {30},
			YEAR = {1978},
			PAGES = {213–235},
		}


\bib{FW}{article}{
    AUTHOR = {Farb, B.},
    AUTHOR = {Wolfson, J.},
    TITLE = {Resolvent degree, Hilbert's 13th Problem and Geometry},
    JOURNAL = {L'Enseignement Math.},
    VOLUME = {65},
    NUMBER = {3},
    YEAR = {2019},
    PAGES = {303--376}
}

\bib {FKW}{article}{
    AUTHOR = {Farb, B.}, 
    AUTHOR = {Kisin, M.},
    AUTHOR = {Wolfson, J.},
    TITLE = {The essential dimension of congruence covers},
   JOURNAL = {Compos. Math.},
   VOLUME = {157},
   NUMBER = {11},
  PAGES = {2407-2432},
 YEAR={2021},
 }

\bib {FKW2}{article}{
    AUTHOR = {Farb, B.}, 
    AUTHOR = {Kisin, M.},
   AUTHOR = {Wolfson, J.},
   TITLE = {Modular functions and resolvent problems (with an appendix by Nate Harman)},
  JOURNAL = {Math. Ann.},
  VOLUME = {386},
  YEAR = {2023},
 PAGES = {113--150},
}

\bib{FKW3}{article}{
    AUTHOR = {Farb, B.}, 
    AUTHOR = {Kisin, M.},
   AUTHOR = {Wolfson, J.},
   TITLE = {Essential dimension via prismatic cohomology)},
  JOURNAL = {Duke Math. J.},
  VOLUME = {173},
  YEAR = {2024},
 PAGES = {3059--3106},
}

\bib{FaKr}{book}{
		AUTHOR = {Farkas, H.},	
		AUTHOR = {Kra, I.},
		TITLE = {Riemann Surfaces},
		PUBLISHER = {Springer}
		YEAR = {1992},
		PAGES = {XVI, 366},
		}

\bib{Ham}{article}{
	LABEL = {Ha1836},
    AUTHOR = {Hamilton, W.R.},	
			TITLE = {Inquiry into the validity of a method recently proposed by George B. Jerrard, esq., for transforming and resolving equations of elevated degrees},
			JOURNAL = {Report of the Sixth Meeting of the British Association for the Advancement of Science (Bristol)},
			YEAR = {1836},
			PAGES = {295-–348},
}

\bib{Ishii1}{article} {
			AUTHOR = {Ishii, N.},	
			TITLE = {Coverings over d-gonal curves},
			JOURNAL = {Tsukuba J. Math.},
			VOLUME = {16},
			NUMBER = {1},
			YEAR = {1992},
			PAGES = {173–189},
		}
				
\bib{Ishii2}{article} {
			AUTHOR = {Ishii, N.},	
			TITLE = {Remarks on d-gonal curves},
			JOURNAL = {Tsukuba J. Math.},
			VOLUME = {19},
			NUMBER = {2},
			YEAR = {1995},
			PAGES = {329–345},
		}

\bib{KM}{article}{
			AUTHOR = {Karpenko, N.},	
            AUTHOR = {Merkurjev, A.},	
			TITLE = {Essential dimension of finite p-groups},
			JOURNAL = {Invent. Math.},
			VOLUME = {172},
			YEAR = {2008},
			PAGES = {491--508},
		}
        
\bib{KleinIcos}{book}{
LABEL = {Kl1884},
   AUTHOR = {Klein, F.},
   TITLE ={Vorlesungen \"uber das Ikosaeder und die Aufl\"osung der Gleichungen vom fünften Grade (Lectures on the Icosahedron and the Solution of the Equation of the Fifth Degree)},
   PUBLISHER = {Leipzig, T\"ubner},
   YEAR = {1884}
}

\bib{Klein8}{article}{
		LABEL = {Kl1879},
		AUTHOR = {Klein, F.},
		TITLE = {Ueber die Aufl\"osung gewisser Gleichungen vom siebenten und achten Grade},
		JOURNAL = {Math. Ann.},
		VOLUME = {15},
		PAGES = {252-282},
		YEAR = {1879}
	}
	
\bib{Ledet}{article}{
	author = {Ledet, A.},
	title = {Finite groups of essential dimension one},
	journal = {J. Alg.}
	volume = {311},
	year = {2007}
	pages = {31-37},
}

\bib{Rei}{article}{
    AUTHOR = {Reichstein, Z.},
     TITLE = {Essential dimension},
   JOURNAL = {Proceedings of the International Congress of Mathematicians},
    VOLUME = {II},
      YEAR = {2010},
     PAGES = {162-188},
}

\bib{ReiYo}{article} {
    AUTHOR = {Reichstein, Z.},
    AUTHOR = {Youssin, B.},
     TITLE = {Essential dimensions of algebraic groups and a resolution
              theorem for {$G$}-varieties},
      NOTE = {With an appendix by J\'{a}nos Koll\'{a}r and Endre Szab\'{o}},
   JOURNAL = {Canad. J. Math.},
                    VOLUME = {52},
      YEAR = {2000},
    NUMBER = {5},
     PAGES = {1018--1056},
      }

\bib{Stichtenoth}{book}{
		AUTHOR = {Stichtenoth, H.},	
		TITLE = {Algebraic Function Fields and Codes},
		PUBLISHER = {Springer}
		YEAR = {2009},
		PAGES = {xiv+355},
		}

\bib{SH}{article} {
    LABEL = {SH1887},
    AUTHOR = {Sylvester, J.J.},
    AUTHOR = {Hammond, J.},
     TITLE = {On Hamilton's Numbers},
   JOURNAL = {Phil. Trans. R. Soc. London A},
    VOLUME = {178},
      YEAR = {1887},
     PAGES = {285--312},
      }

\end{biblist}
\end{bibsection}

\end{document}